\newtheorem{theorem}{Theorem}
\newtheorem{lemma}[theorem]{Lemma}
\newtheorem{corollary}[theorem]{Corollary}
\newtheorem{definition}[theorem]{Definition}
\newtheorem*{question}{Question}
\title{New results for the growth of sets of real numbers}
\date{}
\author{Timothy G. F. Jones\footnote{School of Mathematics, University of Bristol BS8 1TW, United Kingdom, tgf.jones@bristol.ac.uk.}}
\begin{document}

\maketitle

\abstract{
\setlength{\parindent}{0em}
\setlength{\parskip}{1.5ex plus 0.5ex minus 0.5ex}
We use the theory of cross ratios to construct a real-valued function $f$ of only three variables with the property that for any finite set $A$ of reals, the set $f(A)=\left\{f(a,b,c):a,b,c \in A\right\}$ has cardinality at least $C|A|^2/\log |A|$, for an absolute constant $C$. Previously-known functions with this property had all been of four variables. 

We also improve on the state of the art for  functions of four variables by constructing a function $g$ for which $g(A)$ has cardinality at least $C|A|^2$; the previously best-achieved bound was $C|A|^2/\log |A|$. 

Finally, we give an example of a five-variable function $h$ for which $h(A)$ has cardinality at least $C|A|^4/\log |A|$.

Proving these results depends only on the Szemer\'edi-Trotter incidence theorem and an analoguous result for planes due to Edelsbrunner, Guibas and Sharir, each applied in the Erlangen-type framework of Elekes and Sharir. In particular the proofs do not employ the Guth-Katz polynomial partitioning technique or the theory of ruled surfaces.

Although the growth exponents for $f,g$ and $h$ are stronger than those for previously-considered functions, it is not clear that they are necessarily sharp. So we pose a question as to whether the bounds on the cardinalities of $f(A),g(A)$ and $h(A)$ can be further strengthened.  
} 

\setlength{\parindent}{0em}
\setlength{\parskip}{1.5ex plus 0.5ex minus 0.5ex}

\section{Introduction}

Throughout this paper we use $X=\Omega(Y)$, $Y=O(X)$, and $Y\ll X$ all to mean that there is an absolute constant $C$ with $Y \leq CX$.

\subsection{Growth and expanders}

The concept of growth is a major theme in modern arithmetic combinatorics and combinatorial geometry. The motivating example is the sum-product phenomenon, which says that for a set $A \subseteq \mathbb{R}$, at least one of the sumset $A+A=\left\{a+b:a,b \in A\right\}$ and the product set $AA=\left\{ab:a,b \in A\right\}$ will have cardinality at least $\Omega\left(|A|^{1+\delta}\right)$ for some absolute $\delta>0$. The best-known exponent when $A$ is a subset of $\mathbb{R}$ is $\delta=1/3-o(1)$, due to Solymosi \cite{solymosi}.

Another example of growth, and the one considered here, is that of so-called expander functions. An $n$-variable expander is a function $f$ for which the set 

$$f(A)=\left\{f(a_1,\ldots,a_n):a_i \in A\right\}$$

has cardinality at least $\Omega\left(|A|^{1+\delta}\right)$. The study of such functions was initiated by Bourgain \cite{bourgain} in a  finite field setting, but the strongest results are known for real-valued sets and functions. For example in the $n=2$ case Garaev and Shen \cite{GS} obtained $\delta=1/4-o(1)$ for the function $f(a,b)=a(b+1)$. 
 
The most recent progress in this area was in the case $n=4$, due to a breakthrough by Guth and Katz \cite{GK}. They showed that a set $P$ of points in $\mathbb{R}^2$ determines at least $\Omega\left(|P|^{1-o(1)}\right)$ distinct pairwise distances. When $P=A \times A$ this implies that 
$$f(a,b,c,d)=(a-b)^2+(c-d)^2$$
is a four-variable expander with $\delta = 1-o(1)$.  

The Guth-Katz method uses a novel polynomial partitioning argument that gives strong incidence results for points and lines lying in general position in $\mathbb{R}^3$. It also employs classical results on the flecnode polynomial and the theory of ruled surfaces. These were combined with the Erlangen-type observation of Elekes and Sharir \cite{ES} that the `dual' problem to counting distances was to count rigid motions of the plane, and that this could be parameterised as an incidence problem in $\mathbb{R}^3$.

This methodology was also adapted by Iosevich, Roche-Newton and Rudnev \cite{IRR} to show that 
$$f(a,b,c,d)=ad-bc$$
is likewise a four-variable expander with $\delta = 1-o(1)$. Both this and the Guth-Katz result are sharp up to the $o(1)$ in the exponent, as shown by the case where $A$ is an arithmetic progression.

In this paper we construct a function in only three variables rather than four that is nevertheless an expander with $\delta=1-o(1)$. We also improve on the state of the art for four-variable expanders by giving an example with $\delta=1$ instead of $1-o(1)$. Finally, we give an example of a five-variable expander that achieves $\delta=3-o(1)$.

Unlike the previous results for four-variable  expanders, the results here do not require the Guth-Katz polynomial partitioning technique. Their proofs follow the Elekes-Sharir framework of switching to a `dual' incidence problem, but then rely only on the Szemer\'edi-Trotter incidence theorem for points and lines, and a corresponding result due to Edelsbrunner, Guibas and Sharir for planes.

Moreover, and again unlike the previous results, it is not clear that the bounds for any of these functions should be sharp, even up to logarithmic factors. So we pose the question as to whether or not they can be further improved.

\subsection{Statement of results}

Our expander functions are constructed using \textbf{cross ratios}, which are a key concept in projective geometry. For distinct $a,b,c,d \in \mathbb{R}$, the cross ratio $X(a,b,c,d)$ is defined by 

$$X(a,b,c,d)=\frac{(a-b)(c-d)}{(b-c)(a-d)}.$$ 

Define the following functions: 

\begin{align*}
f(a,b,c)&=X(0,a,b,c) \in \mathbb{R}\\
g(a,b,c,d)&=X(a,b,c,d) \in \mathbb{R}\\
h(a,b,c,d,e)&=\left(X(a,b,c,d),X(a,b,c,e)\right)\in \mathbb{R}^2.
\end{align*}

Our results are then:

\begin{theorem}\label{theorem:result1}
$|f(A)|\gg \frac{|A|^2}{\log |A|}$. 
\end{theorem}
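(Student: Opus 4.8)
The plan is to pass to the ``dual'' incidence problem in the spirit of Elekes and Sharir, exploiting the fact that the cross ratio is a complete invariant of four points under the M\"obius group. Writing $r=|f(A)|$ and letting $Q$ denote the number of solutions of $f(a,b,c)=f(a',b',c')$ with $a,b,c,a',b',c'\in A$, a routine Cauchy-Schwarz argument gives $r\ge |A|^6/Q$, up to the negligible correction coming from degenerate triples. It therefore suffices to prove the upper bound $Q\ll |A|^4\log|A|$.

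To analyse $Q$, I would first note that $f(a,b,c)=X(0,a,b,c)$, so the equation $f(a,b,c)=f(a',b',c')$ says precisely that the $4$-tuples $(0,a,b,c)$ and $(0,a',b',c')$ have equal cross ratio. Since the cross ratio is a complete M\"obius invariant, this holds if and only if there is a M\"obius transformation $\phi$ with $\phi(0)=0$, $\phi(a)=a'$, $\phi(b)=b'$ and $\phi(c)=c'$. The transformations fixing $0$ form the two-parameter family $\phi_{p,q}(z)=z/(pz+q)$, and the single condition $\phi_{p,q}(a)=a'$ rearranges to $p+a^{-1}q=(a')^{-1}$, a line $\ell_{a,a'}$ in the parameter plane with coordinates $(q,p)$. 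Distinct pairs $(a,a')\in A\times A$ give distinct lines, since the slope records $a$ and the intercept records $a'$; hence we obtain a set $\mathcal{L}$ of exactly $|A|^2$ lines, and a non-degenerate solution of $f(a,b,c)=f(a',b',c')$ is exactly a point $(q,p)$ lying simultaneously on the three lines $\ell_{a,a'}$, $\ell_{b,b'}$ and $\ell_{c,c'}$.

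Consequently, if $I(P)$ denotes the number of lines of $\mathcal{L}$ through a point $P$, then, after discarding the $O(|A|^4)$ degenerate solutions in which two of $a,b,c$ coincide, we have $Q\le\sum_P I(P)^3$, the sum of cubes of the line multiplicities. The final and main step is to bound this sum using Szemer\'edi-Trotter. Applying the standard corollary that at most $O(|A|^4 k^{-3}+|A|^2 k^{-1})$ points of an arrangement of $|A|^2$ lines meet at least $k$ of them, together with the observation that no point lies on more than $|A|$ lines of $\mathcal{L}$ (two lines of a common slope are parallel, and there are only $|A|$ slopes), a dyadic summation over the ranges $I(P)\approx 2^j$ gives $\sum_P I(P)^3\ll |A|^4\log|A|+|A|^4\ll |A|^4\log|A|$. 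This yields $Q\ll|A|^4\log|A|$ and hence $r\gg|A|^2/\log|A|$, as required.

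I expect the main obstacle to be this sum-of-cubes estimate: unlike a plain incidence (sum-of-squares) count it is not immediately delivered by Szemer\'edi-Trotter, and it is exactly here that the cap $I(P)\le|A|$ and the dyadic decomposition are needed, and exactly here that the logarithmic loss in the statement originates. The secondary delicate point is the bookkeeping of degenerate configurations (coincident or parallel lines, and repeated coordinates) to confirm that they only contribute to lower-order terms.
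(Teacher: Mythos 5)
Your proposal is correct and follows essentially the same route as the paper: Cauchy--Schwarz reduces the problem to bounding the energy by $O(|A|^4\log|A|)$, the cross-ratio invariance converts solutions into projective transformations fixing $0$, the conditions $\tau(a)=a'$ become a family of $\approx|A|^2$ lines with at most $|A|$ through any point, and the cubed multiplicity sum is handled by a dyadic decomposition plus the Szemer\'edi--Trotter corollary. The only (cosmetic) difference is that you parameterize the stabilizer of $0$ explicitly in an affine $(q,p)$-chart, whereas the paper realizes the same line arrangement as the intersections $\pi_{ab}\cap\pi_{00}$ inside the plane $\pi_{00}\subseteq\mathbb{PR}^3$.
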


\begin{theorem}\label{theorem:result2}
$|g(A)|\gg |A|^2$. 
\end{theorem}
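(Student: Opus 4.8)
**The plan is to follow the Elekes–Sharir framework and reduce the problem to an incidence count to which Szemerédi–Trotter applies.** The target is to show that the cross-ratio function $g(a,b,c,d)=\frac{(a-b)(c-d)}{(b-c)(a-d)}$ satisfies $|g(A)|\gg|A|^2$. The natural strategy is an energy/incidence argument: I would bound the number of quadruples $(a,b,c,d,a',b',c',d')\in A^8$ for which $g(a,b,c,d)=g(a',b',c',d')$, and then relate this to $|g(A)|$ via Cauchy–Schwarz. If $N$ denotes the number of such coincidences, then since $|A|^4=\sum_{\lambda}|g^{-1}(\lambda)|$ and $N=\sum_\lambda |g^{-1}(\lambda)|^2$, Cauchy–Schwarz gives $|A|^8\le |g(A)|\cdot N$, so it suffices to prove $N\ll|A|^6$. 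This is exactly the sort of bound Szemerédi–Trotter is designed to deliver, and the absence of a logarithmic loss (unlike Theorem~\ref{theorem:result1}) presumably comes from the extra degree of freedom that the fourth variable provides.

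Let me think more carefully about the geometry. For each \emph{fixed} value $\lambda$ of the cross ratio, the equation $g(a,b,c,d)=\lambda$ is a polynomial relation among four variables. The key observation I would exploit is the projective-transformation interpretation of cross ratio: for fixed $a,b$ the map $\varphi_{a,b}(t)=\frac{(a-b)(t-\,\cdot\,)}{\cdots}$ is a Möbius transformation, and cross ratio is exactly the invariant that detects when two such transformations agree. Concretely, I expect the cleanest route is to fix a pair of variables (say treat $(a,d)$ and $(b,c)$ as the two ``point'' groups) and interpret $g(a,b,c,d)=g(a',b',c',d')$ as an incidence between points and lines (or curves) in the plane, where each coincidence corresponds to a pair of points lying on a common line determined by the Möbius/cross-ratio structure. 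Then the count $N\ll|A|^6$ would follow from the standard Szemerédi–Trotter estimate $I(P,L)\ll (|P||L|)^{2/3}+|P|+|L|$ with $|P|,|L|\approx|A|^2$, possibly after dyadically decomposing according to line-richness and summing.

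**The main obstacle will be setting up the incidence correspondence so that the algebraic relation $g(\cdot)=g(\cdot)$ becomes genuinely a point–line incidence with the right parameter counts,** and in particular controlling degenerate configurations (repeated coordinates, forbidden zeros/denominators in the cross ratio, and coincidences forced by collinearity or affine symmetries rather than by the generic structure). I would first carve out the degenerate solutions of $N$ and show they contribute only $O(|A|^5)$ or lower order, so they are absorbed; the substantive count is over nondegenerate octuples. A delicate point is ensuring that no single line in $L$ can be over-rich in a way that breaks the $|A|^{6}$ bound — i.e.\ that the Möbius parameterization does not force many solutions to collapse onto one line, which would be the analogue of the arithmetic-progression extremal example. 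I expect the cross-ratio invariance to guarantee exactly the right rigidity here: because $g$ is a genuine four-variable invariant with no hidden lower-dimensional symmetry (unlike $f$, which lost a variable and hence a logarithm), each line carries at most $O(|A|)$ incidences, and Szemerédi–Trotter then closes the argument with no logarithmic loss, yielding $N\ll|A|^6$ and hence $|g(A)|\gg|A|^2$.
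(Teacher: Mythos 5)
Your overall frame (energy $E_2(A)$ via Cauchy--Schwarz, then an Elekes--Sharir-style dualisation of the coincidence $g(a_1,\dots,a_4)=g(b_1,\dots,b_4)$ into the existence of a M\"obius transformation sending each $a_i$ to $b_i$) matches the paper. But the central step is wrong: you propose to realise the count as a point--\emph{line} incidence problem in $\mathbb{R}^2$ and invoke Szemer\'edi--Trotter. The group $PSL_2(\mathbb{R})$ is three-dimensional; each condition $\tau(a)=b$ is a single linear constraint on the matrix entries and hence cuts out a \emph{plane} in $\mathbb{PR}^3$, not a line in $\mathbb{R}^2$. The energy becomes the fourth moment $\sum_\tau N_2(\tau)^4$ of plane-richness over the $\approx|A|^2$ planes $\pi_{ab}$, and the tool needed is the Edelsbrunner--Guibas--Sharir point--plane incidence theorem (whose hypothesis, that no three planes are collinear, must be verified -- in the paper this follows from sharp $3$-transitivity of $PSL_2(\mathbb{R})$). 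A planar point--line reduction of the kind you sketch is what works for Theorem~\ref{theorem:result1}, precisely because there the transformations are constrained to fix $0$, which drops the dimension to two and confines everything to the single plane $\pi_{00}$; no analogous reduction is available for the unconstrained four-variable problem, and you give no construction of the claimed point and line sets.

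Your explanation of why the logarithm disappears is also not the right mechanism. It has nothing to do with each line carrying at most $O(|A|)$ incidences (a richness bound of that type holds in Theorem~\ref{theorem:result1} as well, where the log survives). The log is absent because the EGS rich-point bound decays like $k^{-5}$ while the moment being summed is $k^4$, so the dyadic sum $\sum_j\bigl(|\Pi|^3 2^{-5j}+|\Pi|2^{-j}\bigr)2^{4j}$ converges geometrically to $O(|\Pi|^3)=O(|A|^6)$; with the Szemer\'edi--Trotter exponent $k^{-3}$ against a fourth moment the sum would not even be bounded by the rich-point estimate alone. So the proposal has the correct skeleton but is missing the essential idea -- the passage to point--plane incidences in projective $3$-space -- and as written the incidence step would fail.
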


\begin{theorem}\label{theorem:result3}
$|h(A)|\gg \frac{|A|^4}{\log |A|}$. 
\end{theorem}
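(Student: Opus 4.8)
The plan is to bound the number of collisions of $h$ and convert this into a lower bound on $|h(A)|$ by Cauchy--Schwarz. Writing $N$ for the number of ten-tuples $(a,b,c,d,e,a',b',c',d',e')\in A^{10}$ with $h(a,b,c,d,e)=h(a',b',c',d',e')$, Cauchy--Schwarz gives $|h(A)|\ge |A|^{10}/N$, so it suffices to prove $N\ll |A|^6\log|A|$.

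The key step is to reformulate collisions in terms of the group of M\"obius transformations, exactly as in the Elekes--Sharir framework. Since the cross ratio is a complete projective invariant, I would first check that $X(a,b,c,d)=X(a',b',c',d')$ holds precisely when the unique M\"obius transformation $M$ determined by $M(a)=a'$, $M(b)=b'$, $M(c)=c'$ also satisfies $M(d)=d'$. Applying this to both coordinates of $h$ at once, and noting that the two coordinates share the same triple $a,b,c$ and hence the same $M$, a collision is equivalent to the existence of a single M\"obius transformation $M$ with $M(a,b,c,d,e)=(a',b',c',d',e')$. Writing $r(M)=|\{x\in A:M(x)\in A\}|$ and recalling that $M$ is pinned down by the images of $a,b,c$, every collision is accounted for by a choice of such an $M$ together with five arguments mapped into $A$ (three to determine $M$, two more to be sent into $A$), giving
\[
N \ll \sum_{M\,:\,r(M)\ge 3} r(M)^5,
\]
with the tuples having a repeated argument contributing only to lower-order terms.

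It remains to bound this fifth moment, and here I would pass to an incidence problem. Parametrise M\"obius transformations as points $(\alpha:\beta:\gamma:\delta)\in\mathbb{P}^3$ via $t\mapsto(\alpha t+\beta)/(\gamma t+\delta)$. For each pair $(x,y)\in A\times A$ the condition $M(x)=y$ reads $\alpha x+\beta-\gamma xy-\delta y=0$, which is linear in the parameters and so defines a plane $\pi_{x,y}$ with coefficient vector $(x,1,-xy,-y)$. Thus $r(M)$ is exactly the number of the $n=|A|^2$ planes $\{\pi_{x,y}\}$ through the point $M$. Taking the point set to consist of those $M$ lying on at least $k$ planes, the Edelsbrunner--Guibas--Sharir point--plane incidence bound $I\ll m^{4/5}n^{3/5}+m+n$ (valid under the non-degeneracy hypothesis discussed below) yields $m_k\ll n^3/k^5+n/k=|A|^6/k^5+|A|^2/k$ for the number $m_k$ of such points. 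Summing dyadically, $\sum_M r(M)^5\approx\sum_{3\le k\le |A|}k^4 m_k\ll |A|^6\sum_{k\le|A|}k^{-1}+|A|^2\sum_{k\le|A|}k^3\ll |A|^6\log|A|$, as required. The same computation with $r(M)^4$ gives the clean bound $\sum_M r(M)^4\ll|A|^6$ of Theorem~\ref{theorem:result2}; the extra factor $r(M)$ coming from the fifth argument $e$ is precisely what turns a convergent sum into the harmonic sum responsible for the logarithm.

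The step I expect to require the most care is verifying the non-degeneracy hypothesis needed to invoke the incidence theorem, namely that no line of $\mathbb{P}^3$ lies in three of the planes $\pi_{x,y}$. The planes containing a fixed line are those whose coefficient vector $(x,1,-xy,-y)$ lies in a fixed two-dimensional subspace, i.e. satisfies two independent linear relations; each such relation is a M\"obius relation between $x$ and $y$, and two distinct M\"obius relations share at most two common solutions. Hence at most two planes contain any line, which makes the incidence graph free of the relevant complete bipartite subgraph and legitimises the Edelsbrunner--Guibas--Sharir bound. The remaining technical points---restricting to an affine chart of $\mathbb{P}^3$, discarding non-invertible parameter points and degenerate tuples with non-distinct arguments, and confirming that transformations with small $r(M)$ are negligible---are routine.
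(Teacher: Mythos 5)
Your overall route is exactly the paper's: Cauchy--Schwarz reduces the theorem to the energy bound $E_3(A)\ll|A|^6\log|A|$; the projective invariance of the cross ratio (Lemma \ref{theorem:invariant}) converts a collision into a single transformation $M$ matching all five coordinates, so that $E_3(A)\ll\sum_M r(M)^5$; the transformations are parametrised as points of $\mathbb{PR}^3$ and the conditions $M(x)=y$ as planes $\pi_{xy}$ (the Points and Planes Lemmas); and the fifth moment is bounded by the Edelsbrunner--Guibas--Sharir corollary plus a level-set summation, with the extra factor of $r(M)$ relative to Theorem \ref{theorem:result2} producing the harmonic sum and hence the logarithm. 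All of that, including the arithmetic $\sum_k k^4(|A|^6k^{-5}+|A|^2k^{-1})\ll|A|^6\log|A|$, is correct and is what the paper does.

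The one place where your argument is actually wrong is the verification of the hypothesis of the incidence theorem. Your conclusion that at most two of the planes $\pi_{x,y}$ contain any given line is false: for a fixed $x_0$, every plane $\pi_{x_0,y}$, $y\in A$, contains the line $\{\alpha x_0+\beta=0,\ \gamma x_0+\delta=0\}$, since its equation is $(\alpha x_0+\beta)-y(\gamma x_0+\delta)=0$; dually, all planes $\pi_{x,y_0}$ share the line $\{\alpha=\gamma y_0,\ \beta=\delta y_0\}$. So there are lines lying in $|A|$ of the planes. Your argument breaks at the assertion that two distinct linear relations on the coefficient vector $(x,1,-xy,-y)$ share at most two solutions: a relation $c_1x+c_2-c_3xy-c_4y=0$ is the graph of a M\"obius map only when $c_1c_4\neq c_2c_3$; when it is degenerate it factors as $(c_3x+c_4)(\lambda-y)=0$ and its solution set contains a whole vertical or horizontal line, which another degenerate relation can share entirely (this is precisely what happens in the example above). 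The correct statement, which is what the paper's Planes Lemma is really proving via sharp $3$-transitivity, is that three planes $\pi_{x_i,y_i}$ with the $x_i$ distinct and the $y_i$ distinct meet in the single point $\psi(\tau)$; the genuinely collinear triples are exactly those sharing a first or second coordinate, and their common lines lie on the quadric $Q=\{\alpha\delta=\beta\gamma\}$ of singular matrices, which is disjoint from your point set $\{\psi(M)\}$. You therefore cannot invoke the Edelsbrunner--Guibas--Sharir bound in the form stated (its hypothesis is on the planes alone and is violated); you need either to argue that incidences along these excluded lines contribute nothing because no point of $P$ lies on $Q$, or to use a point--plane incidence theorem whose hypothesis concerns collinear points of $P$ rather than collinear planes. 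This is the genuinely delicate point of the whole argument, and as written your treatment of it does not go through.
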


It is not obvious that Theorems \ref{theorem:result1}, \ref{theorem:result2} or \ref{theorem:result3} should be sharp. For example, the previous results on four-variable expanders are seen to be sharp up to logarithmic factors by considering the case in which $A$ is an arithmetic progression. However in this case one can verify that 

$$\left|f(A)\right|=\left|g(A)\right|\gg \frac{|A|^3}{\log |A|}$$ 

which is a stronger result than implied by Theorems \ref{theorem:result1} and \ref{theorem:result2}. We therefore ask the following question.

\begin{question}
Can Theorems \ref{theorem:result1}, \ref{theorem:result2} or \ref{theorem:result3} be improved? That is, what are the largest-possible $\delta_1,\delta_2,\delta_3$ for which $|f(A)|\gg |A|^{1+\delta_1}$, $|g(A)|\gg |A|^{1+\delta_2}$ and $|h(A)|\gg |A|^{1+\delta_3}$ for any finite $A \subseteq \mathbb{R}$?
\end{question}

In what follows, Section \ref{section:background} gives necessary background material on projective transformations and cross ratios. Section \ref{section:embedding} identifies projective transformations with points in three-dimensional projective space, and establishes how the transformations' behaviour corresponds to line and plane structures of points. Section \ref{section:incidences} recalls incidence theorems with which to analyse these structures. Finally, Section \ref{section:proofs} uses the material from the previous sections to give proofs of Theorems \ref{theorem:result1}, \ref{theorem:result2} and \ref{theorem:result3}.

\section{Background on projective transformations and cross ratios}\label{section:background}
This section gives the necessary backgound on projective transformations and cross ratios. The material is standard, and can be found in textbooks on projective geometry, for example the book of P. Samuel \cite{PS}.

\subsection{Projective transformations}
We work with the extended real line $\overline{\mathbb{R}}=\mathbb{R} \cup \left\{\infty\right\}$.

\begin{definition}
The group $PSL_2(\mathbb{R})$ of \textbf{projective transformations} of $\overline{\mathbb{R}}$ is defined by $PSL_2(\mathbb{R})=SL_2(\mathbb{R})/\pm I$. It has an action on $\overline{\mathbb{R}}$ given by

$$
\left[
\left(
\begin{array}{cc}
p&q\\
r&s
\end{array}
\right)
\right]
x= \frac{px+q}{rx+s}
$$
which is interpreted in the sense of limits where necessary.
\end{definition}

One can easily check the following facts about $PSL_2(\mathbb{R})$.
\begin{lemma}
The action of $PSL_2(\mathbb{R})$ on $\overline{\mathbb{R}}$ is well-defined. For each $\tau \in PSL_2(\mathbb{R})$, the map $x \mapsto \tau (x)$ is a bijection from $\overline{\mathbb{R}}$ to itself.
\end{lemma}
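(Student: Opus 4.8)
The plan is to verify the two assertions in turn, treating both as direct computations organised around the determinant condition $ps-qr=1$. Write a representative of $\tau$ as $A=\left(\begin{smallmatrix}p&q\\r&s\end{smallmatrix}\right)$ with $ps-qr=1$, and recall that the only ambiguity in the coset $[A]$ is the sign, since the quotient is by $\pm I$.

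For well-definedness I would first check independence of the representative: replacing $(p,q,r,s)$ by $(-p,-q,-r,-s)$ multiplies both the numerator $px+q$ and the denominator $rx+s$ by $-1$, leaving the value $\tfrac{px+q}{rx+s}$ unchanged, so the formula descends to $PSL_2(\mathbb{R})$. Next I would confirm the map is genuinely defined at every point of $\overline{\mathbb{R}}$, which is where the limit interpretation is needed. At a finite $x$ with $rx+s\neq 0$ the value is the ordinary fraction; at the pole $x=-s/r$ (when $r\neq 0$) one sets $\tau(x)=\infty$, and at $x=\infty$ one sets $\tau(\infty)=p/r$ if $r\neq 0$ and $\tau(\infty)=\infty$ if $r=0$. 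The crucial point is that $ps-qr=1$ rules out any $0/0$ indeterminacy: if $rx+s=0$ then $px+q=\tfrac{qr-ps}{r}=-\tfrac{1}{r}\neq 0$, so numerator and denominator never vanish together, and each of the assigned values is exactly the relevant one-sided or two-sided limit of the fraction.

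For bijectivity I would exploit the group structure rather than invert the fractional map by hand. The key step is to show that composition of actions corresponds to matrix multiplication: if $\sigma,\tau$ have representatives $B,A$ respectively, then a direct substitution of $\tau(x)=\tfrac{px+q}{rx+s}$ into the formula for $\sigma$ yields $(\sigma\circ\tau)(x)=\tfrac{p'x+q'}{r'x+s'}$ with $\left(\begin{smallmatrix}p'&q'\\r'&s'\end{smallmatrix}\right)=BA$, and one checks this identity persists at $x=\infty$ and at the poles using the limit convention above. Since $I$ clearly acts as the identity map and $\det A=1$ forces $A^{-1}\in SL_2(\mathbb{R})$, the transformation with representative $A^{-1}$ is a two-sided inverse of $\tau$, so $\tau$ is a bijection of $\overline{\mathbb{R}}$.

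The main obstacle is entirely bookkeeping at the exceptional points $\infty$ and the poles $-s/r$, where the naive algebra is inapplicable and one must instead verify the limit assignments and the composition identity case by case. None of these checks is deep, but care is required to confirm that the homomorphism property holds uniformly across all the boundary cases; once that is done, both well-definedness and bijectivity follow immediately.
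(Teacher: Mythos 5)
Your proof is correct and supplies exactly the routine verification that the paper omits (it merely asserts "one can easily check" these facts without proof). The two key points — that the determinant condition $ps-qr=1$ prevents numerator and denominator from vanishing simultaneously, and that bijectivity follows from the homomorphism property together with the inverse matrix — are the standard and intended argument.
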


Another important property of $PSL_2(\mathbb{R})$ is that its action on $\overline{\mathbb{R}}$ is sharply 3-transitive. That is, a projective transformation $\tau \in PSL_2(\mathbb{R})$ is determined exactly by its image at any three elements of $\overline{\mathbb{R}}$, as shown by the following lemma.

\begin{lemma}\label{theorem:transitive}
Let $\mathcal{T}$ be the set of ordered triples of distinct elements of $\overline{\mathbb{R}}$. If  $\left(a,b,c\right)$ and $\left(d,e,f\right)$ are both in $\mathcal{T}$ then there is a unique $\tau \in PSL_2 (\mathbb{R})$ for which $\left(\tau(a),\tau(b),\tau(c)\right)=\left(d,e,f\right)$.
\end{lemma}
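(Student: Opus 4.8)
The plan is to exploit the group structure and reduce everything to a single fixed reference triple. Fix once and for all the reference triple $(0,1,\infty)\in\mathcal{T}$. If I can show that for every $(a,b,c)\in\mathcal{T}$ there is a unique $\sigma_{abc}\in PSL_2(\mathbb{R})$ with $\left(\sigma_{abc}(a),\sigma_{abc}(b),\sigma_{abc}(c)\right)=(0,1,\infty)$, then the lemma follows at once: given two triples $(a,b,c)$ and $(d,e,f)$, the composite $\tau=\sigma_{def}^{-1}\circ\sigma_{abc}$ sends $(a,b,c)$ to $(d,e,f)$ and lies in $PSL_2(\mathbb{R})$ because the group is closed under composition and inversion. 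Thus the whole statement reduces to the special case of mapping an arbitrary triple to $(0,1,\infty)$, and I would treat existence and uniqueness of $\sigma_{abc}$ separately.

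For existence I would simply write the map down. The transformation
$$\sigma(x)=\frac{(x-a)(b-c)}{(x-c)(b-a)}$$
sends $a\mapsto 0$, $b\mapsto 1$ and $c\mapsto\infty$, where the cases in which one of $a,b,c$ equals $\infty$ are handled by cancelling the corresponding factor in the usual limiting sense. Reading off the coefficients presents $\sigma$ as a matrix in $GL_2(\mathbb{R})$ whose determinant is $(a-b)(b-c)(c-a)$, which is nonzero precisely because $a,b,c$ are distinct; hence $\sigma$ is a genuine projective transformation.

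For uniqueness I would argue that the stabiliser of any three distinct points is trivial. If $\tau_1,\tau_2$ both send $(a,b,c)$ to $(d,e,f)$, then $\tau_2^{-1}\tau_1$ fixes each of $a,b,c$, so it suffices to show that the only element fixing three distinct points is the identity. Writing $\tau(x)=(px+q)/(rx+s)$, the finite fixed points satisfy the polynomial equation $rx^2+(s-p)x-q=0$, and $\infty$ is fixed only when $r=0$; a non-identity $\tau$ therefore has at most two fixed points in $\overline{\mathbb{R}}$ and cannot fix the three distinct points $a,b,c$. Hence $\tau_2^{-1}\tau_1=\mathrm{id}$, giving $\tau_1=\tau_2$ and the desired uniqueness of both $\sigma_{abc}$ and $\tau$.

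The step I expect to require genuine care — and the main obstacle — is verifying that the explicit $\sigma$ really lies in $PSL_2(\mathbb{R})=SL_2(\mathbb{R})/\pm I$ rather than merely in $PGL_2(\mathbb{R})$. Rescaling a matrix by $\lambda$ multiplies its determinant by $\lambda^2>0$, so scaling can normalise the determinant to have absolute value $1$ but cannot change its sign; membership in $PSL_2(\mathbb{R})$ therefore depends on the sign of $(a-b)(b-c)(c-a)$, which is in turn governed by the cyclic order of $(a,b,c)$ on $\overline{\mathbb{R}}$. In carrying out the composition $\sigma_{def}^{-1}\circ\sigma_{abc}$ I would need to confirm that these sign conditions are compatible between the two triples, and this is the one place where the argument must be made precise rather than waved through.
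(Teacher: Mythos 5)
Your architecture is the same as the paper's: reduce to a single fixed reference triple, exhibit an explicit transformation, and obtain the general case and its uniqueness by composing. Your uniqueness step is in fact cleaner than the paper's --- the paper solves the three interpolation conditions directly and checks that the matrix is forced up to scalar, whereas your observation that a non-identity element has at most two fixed points (the roots of $rx^2+(s-p)x-q=0$, together with $\infty$ only when $r=0$) is shorter and reusable.

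The issue you flag at the end, however, is not a formality to be ``made precise'': it is a genuine obstruction, and the sign conditions are \emph{not} always compatible. With the paper's literal definition $PSL_2(\mathbb{R})=SL_2(\mathbb{R})/\pm I$, every class is represented by a matrix of determinant $+1$, and rescaling a matrix by $\lambda$ multiplies the determinant by $\lambda^2>0$, so no transformation whose matrix has negative determinant lies in $PSL_2(\mathbb{R})$. Concretely, take $(a,b,c)=(0,1,\infty)$ and $(d,e,f)=(1,0,\infty)$: a transformation fixing $\infty$ has $r=0$, so $\tau(x)=(px+q)/s$, and the conditions $\tau(0)=1$, $\tau(1)=0$ force $q=s$ and $p=-q$, whence $ps=-s^2<0$ and there is no representative in $SL_2(\mathbb{R})$. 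So the lemma is false as literally stated: sharp $3$-transitivity on $\overline{\mathbb{R}}$ holds for $PGL_2(\mathbb{R})$ (nonsingular matrices modulo scalars), while $PSL_2(\mathbb{R})$ preserves the cyclic orientation of $\overline{\mathbb{R}}$ and can only match triples of the same orientation. The paper's own proof shares this gap --- it exhibits a matrix without ever checking that its determinant can be scaled to $+1$ --- and the surrounding material (e.g.\ the points lemma, whose image should be the region $ps-qr>0$ rather than all of $\mathbb{PR}^3\setminus Q$) really concerns $PGL_2(\mathbb{R})$. Once the group is taken to be $PGL_2(\mathbb{R})$, your existence formula (with determinant $(a-b)(b-c)(c-a)\neq 0$) and your fixed-point uniqueness argument complete the proof with no further work; you should either make that substitution explicit or restrict the statement to triples of equal cyclic orientation.
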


\begin{proof}
It suffices to show that for any $(a,b,c)\in \mathcal T$ there is a unique $\tau$ that sends $(a,b,c)$ to $(\infty,0,1)$. Indeed, if this is established then given $(a,b,c)$ and $(d,e,f)$ we can pick $\tau_1,\tau_2$ respectively sending each of them to $(\infty,0,1)$. Then $\mu = \tau_2^{-1} \tau_1$ sends $(a,b,c)$ to $(d,e,f)$, and $\mu$ is unique since any $\mu'$ with this property must satisfy $\tau_2  \mu' =\tau_1$.  

We now show that $$\left[\left(
\begin{array}{cc}
c-a&(c-a)b\\
c-b&(c-b)a
\end{array}
\right)
\right]
$$
is the unique element of $PSL_2(\mathbb{R})$ that sends $(a,b,c)$ to $(\infty,0,1)$. Suppose that 
$$
\tau=
\left[
\left(
\begin{array}{cc}
p&q\\
r&s
\end{array}
\right)
\right]
$$

We have
$\tau(x)=\frac{px+q}{rx+s}$
and so $\tau $ sends $(a,b,c)$ to $(\infty,0,1)$ precisely when 
\begin{enumerate}
\item $ra+s=0$ and $rb+s \neq 0$.
\item $pb+q=0$ and $pa+q \neq 0$.
\item $pc+q=rc+s \neq 0$.
\end{enumerate}
One can check that these three conditions are satisfied precisely when 

$$\left[\left(
\begin{array}{cc}
p&q\\
r&s
\end{array}
\right)\right]
= \left[\left(
\begin{array}{cc}
c-a&(c-a)b\\
c-b&(c-b)a
\end{array}
\right)
\right].
$$
\end{proof}

\subsection{Cross ratios}
Recall that the \textbf{cross ratio} is defined as $X(a,b,c,d)=\frac{(a-b)(c-d)}{(b-c)(a-d)}$ for distinct $a,b,c,d \in \mathbb{R}$. The key importance of the cross ratio is that it is a projective invariant of quadruples, in the following sense.

\begin{lemma}\label{theorem:invariant}
$X(a_1,a_2,a_3,a_4)=X(b_1,b_2,b_3,b_4)$ if and only if there is a projective transformation that sends each $a_i$ to $b_i$.
\end{lemma}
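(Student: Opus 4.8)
My plan is to combine the sharp $3$-transitivity of Lemma \ref{theorem:transitive} with a single computational observation: for fixed distinct $a,b,c$, the one-variable map $d \mapsto X(a,b,c,d)$ is itself a projective transformation. Writing $X(a,b,c,d)=\frac{(a-b)(c-d)}{(b-c)(a-d)}$ as a function of $d$ displays it in the M\"obius form $\frac{\alpha d+\beta}{\gamma d + \delta}$, and the determinant of the associated matrix works out to $(a-b)(b-c)(c-a)$, which is nonzero precisely because $a,b,c$ are distinct. Hence the map is a genuine element $\phi_{a,b,c}\in PSL_2(\mathbb{R})$. Substituting $d=a,b,c$ gives the values $\infty,-1,0$ respectively, so $\phi_{a,b,c}$ is the unique transformation sending $(a,b,c)$ to $(\infty,-1,0)$, and by construction $X(a,b,c,d)=\phi_{a,b,c}(d)$. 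I expect this identification to be the main obstacle, though it is only a routine computation; once it is in hand, both directions of the lemma reduce to bookkeeping with compositions.

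For the forward direction I would assume a transformation $\tau$ sends each $a_i$ to $b_i$, and consider $\phi_{b_1,b_2,b_3}\circ\tau$. This sends $(a_1,a_2,a_3)$ first to $(b_1,b_2,b_3)$ and then to $(\infty,-1,0)$, so by the uniqueness clause of Lemma \ref{theorem:transitive} it must equal $\phi_{a_1,a_2,a_3}$. Evaluating both maps at $a_4$ then gives $X(a_1,a_2,a_3,a_4)=\phi_{a_1,a_2,a_3}(a_4)=\phi_{b_1,b_2,b_3}(\tau(a_4))=\phi_{b_1,b_2,b_3}(b_4)=X(b_1,b_2,b_3,b_4)$.

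For the reverse direction I would assume the two cross ratios agree, i.e. $\phi_{a_1,a_2,a_3}(a_4)=\phi_{b_1,b_2,b_3}(b_4)$, and set $\tau=\phi_{b_1,b_2,b_3}^{-1}\circ\phi_{a_1,a_2,a_3}$, which lies in $PSL_2(\mathbb{R})$ as a composition of its elements. Tracking the three base points shows that $\tau$ carries $(a_1,a_2,a_3)$ to $(\infty,-1,0)$ and thence to $(b_1,b_2,b_3)$, so $\tau(a_i)=b_i$ for $i=1,2,3$; and $\tau(a_4)=\phi_{b_1,b_2,b_3}^{-1}\!\left(\phi_{a_1,a_2,a_3}(a_4)\right)=\phi_{b_1,b_2,b_3}^{-1}\!\left(\phi_{b_1,b_2,b_3}(b_4)\right)=b_4$. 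Thus $\tau$ sends every $a_i$ to $b_i$, completing the equivalence.
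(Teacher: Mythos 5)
Your proof is correct and takes essentially the same route as the paper: both identify $d\mapsto X(a,b,c,d)$ as the unique projective transformation sending $(a,b,c)$ to a fixed reference triple and then reduce the equivalence to sharp $3$-transitivity via the composition identity $\phi_{a_1,a_2,a_3}=\phi_{b_1,b_2,b_3}\circ\mu$. Incidentally, your computed reference triple $(\infty,-1,0)$ is the correct one (since $X(a,b,c,b)=-1$); the paper's proof writes $(\infty,1,0)$, which is a harmless typo that does not affect the argument.
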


\begin{proof}
Let $\mu$ be the projective transformation that sends $a_i$ to $b_i$ for $i=1,2,3$. We shall show that $X(a_1,a_2,a_3,a_4)=X(b_1,b_2,b_3,b_4)$ if and only if $\mu$ also sends $a_4$ to $b_4$.

First note that $X(a,b,c,d)=\tau_{abc}(d)$ where $\tau_{abc} \in PSL_2(\mathbb{R})$ is the unique projective transformation that sends $\left(a,b,c\right)$ to $\left(\infty,1,0\right)$. To see this it suffices simply to check that 

$$\tau_{abc}=\left[
\left(
\begin{array}{lr}
b-a & (a-b)c\\
c-b & (b-c)a
\end{array}
\right)\right]$$

and then that $\tau_{abc}(d)=X(a,b,c,d)$.

From this observation we know that $X(a_1,a_2,a_3,a_4)=X(b_1,b_2,b_3,b_4)$ if and only if $\tau_{a_1 a_2 a_3}(a_4)=\tau_{b_1b_2b_3}(b_4)$. But $\tau_{a_1a_2a_3}=\tau_{b_1b_2b_3}\circ \mu$ and so by injectivity of $\tau_{b_1 b_2 b_3}$ this occurs precisely when $\mu(a_4)=b_4$. So we are done.
\end{proof}

\section{Points, planes and transformations}\label{section:embedding}

This section contains two results. The first - a `points lemma' - identifies projective transformations from $PSL_2(\mathbb{R})$ with points in $\mathbb{PR}^3$. The second - a `planes lemma' - establishes that the behaviour of transformations corresponds to line and plane structures of their associated points.

\begin{lemma}[Points lemma]\label{theorem:points}
Define $\psi:PSL_2(\mathbb{R})\to \mathbb{PR}^3$ by 
$$\psi\left[\left(
\begin{array}{cc}
p&q\\
r&s	
\end{array}
\right)\right]
=[p,q,r,s].$$
The map $\psi$ is well-defined and injective, and its image is $\mathbb{PR}^3\setminus Q$ where $Q$ is the quadratic surface given by $ps=qr$.
\end{lemma}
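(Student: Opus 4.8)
The plan is to verify the three claimed properties of $\psi$ in turn, treating them as essentially a check that the map from matrices to homogeneous coordinates respects the quotient structures on both sides. First I would establish well-definedness. The domain $PSL_2(\mathbb{R})$ consists of equivalence classes $[M]$ of matrices $M \in SL_2(\mathbb{R})$ under the relation $M \sim -M$, while $\mathbb{PR}^3$ identifies nonzero vectors of $\mathbb{R}^4$ up to arbitrary nonzero scalar. So I must check that $\psi$ does not depend on the chosen representative: if $[M]=[M']$ in $PSL_2(\mathbb{R})$ then $M'=\pm M$, whence $(p',q',r',s')=\pm(p,q,r,s)$, and these represent the same point of $\mathbb{PR}^3$. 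I should also note that the coordinate vector $(p,q,r,s)$ is never zero, since $M \in SL_2(\mathbb{R})$ has $\det M = ps-qr = 1 \neq 0$, so $[p,q,r,s]$ is a genuine point of $\mathbb{PR}^3$.

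Next I would prove injectivity. Suppose $\psi[M]=\psi[M']$, i.e.\ $(p,q,r,s)$ and $(p',q',r',s')$ are proportional, say $(p',q',r',s')=\lambda(p,q,r,s)$ for some nonzero $\lambda$. Taking determinants of the underlying matrices and using that both lie in $SL_2(\mathbb{R})$ gives $1 = \det M' = \lambda^2 \det M = \lambda^2$, so $\lambda = \pm 1$. Hence $M' = \pm M$, which means $[M]=[M']$ in $PSL_2(\mathbb{R})$. This is the step where the $\pm I$ quotient on the source and the scalar quotient on the target are forced to match up exactly, and it is the cleanest way to see why $PSL_2$ rather than $SL_2$ is the right object.

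Finally I would identify the image as $\mathbb{PR}^3 \setminus Q$ where $Q = \{[p,q,r,s] : ps=qr\}$. For the inclusion $\psi(PSL_2(\mathbb{R})) \subseteq \mathbb{PR}^3 \setminus Q$, note that any point in the image comes from a matrix with $ps-qr=1$, so in particular $ps \neq qr$, placing it off $Q$; and this is well-defined on $Q$ since $ps-qr$ scales by $\lambda^2 > 0$ under $(p,q,r,s) \mapsto \lambda(p,q,r,s)$, so the sign of $ps-qr$ (in particular whether it vanishes) is a projective invariant. For the reverse inclusion, take any $[p,q,r,s] \notin Q$, so $ps-qr \neq 0$. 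I would rescale by a suitable factor $\lambda$ to arrange $\det = 1$: setting $\lambda = (ps-qr)^{-1/2}$ works provided $ps-qr>0$, and the point is that I can always choose the representative of the projective point so that this holds. Concretely, since $[p,q,r,s]=[-p,-q,-r,-s]$ and $(-p)(-s)-(-q)(-r)=ps-qr$, negation does not change the sign, so I would instead use that any nonzero scalar multiple is available: replacing $(p,q,r,s)$ by $\mu(p,q,r,s)$ changes $ps-qr$ to $\mu^2(ps-qr)$, and choosing $\mu$ real with $\mu^2 = |ps-qr|^{-1}$ normalizes the determinant to $\pm 1$. I expect the one genuine subtlety here to be handling the sign of the determinant: a real matrix can only be rescaled by real $\mu$, which multiplies the determinant by the positive quantity $\mu^2$, so a point with $ps-qr<0$ cannot be scaled into $SL_2(\mathbb{R})$ and lands instead in the other component. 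I would therefore check carefully whether the intended claim is that the image is exactly the complement of $Q$ or only the half of it with positive determinant form; resolving this sign issue, and confirming that the paper's normalization places the whole complement of $Q$ in the image, is the main point requiring care, after which the surjectivity onto $\mathbb{PR}^3 \setminus Q$ follows by exhibiting the normalized representative explicitly.
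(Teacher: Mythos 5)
Your treatment of well-definedness and injectivity is complete and is exactly what the paper's one-line proof intends: $M'=\pm M$ gives the same point of $\mathbb{PR}^3$, and conversely proportionality $(p',q',r',s')=\lambda(p,q,r,s)$ of two $SL_2$ matrices forces $1=\lambda^2$, hence $\lambda=\pm 1$. No issues there.

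The sign problem you flag in the surjectivity step is genuine, and you are right not to wave it away: it cannot be resolved in favour of the lemma as literally stated. With $PSL_2(\mathbb{R})=SL_2(\mathbb{R})/\pm I$ as the paper defines it, the image of $\psi$ is exactly $\left\{[p,q,r,s]:ps-qr>0\right\}$, since rescaling by real $\mu$ multiplies $ps-qr$ by $\mu^2>0$. This is a proper open subset of $\mathbb{PR}^3\setminus Q$; for instance $[1,0,0,-1]$ lies off $Q$ but is not $\psi(\tau)$ for any $\tau\in PSL_2(\mathbb{R})$. The paper's proof simply asserts that the image claim ``follows from the definition'' and never confronts this, so the gap is in the paper's statement rather than in your argument. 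The correct fix is to replace $PSL_2(\mathbb{R})$ by $PGL_2(\mathbb{R})=GL_2(\mathbb{R})/\mathbb{R}^{\times}$ throughout: then any $[p,q,r,s]$ with $ps-qr\neq 0$ is already a class of invertible matrices and surjectivity onto $\mathbb{PR}^3\setminus Q$ is immediate, while injectivity still holds because proportional matrices define the same class. This substitution is in fact forced elsewhere in the paper: the sharp 3-transitivity of Lemma \ref{theorem:transitive} fails for $SL_2(\mathbb{R})/\pm I$ (which preserves the cyclic orientation of $\mathbb{RP}^1$), and the explicit matrix exhibited there has determinant $(c-a)(c-b)(a-b)$, which can be negative and so need not normalise into $SL_2(\mathbb{R})$. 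The downstream applications are unaffected, since the point sets $P=\psi(T_i)$ in Section \ref{section:proofs} lie in the image of $\psi$ by construction, but your proof of the image claim can only be completed after this change of group.
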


\begin{proof}
That $\psi$ is well-defined and injective follows from checking that if $t_1,t_2 \in SL_2(\mathbb{R})$ then $\psi[t_1]=\psi[t_2]$ if and only if $t_1=\pm t_2$. That the image is $\mathbb{PR}^3\setminus Q$ follows from the definition $PSL_2(\mathbb{R})=SL_2(\mathbb{R})/\pm I$. 
\end{proof}

\begin{lemma}[Planes lemma]\label{theorem:planes}
Let $\psi$ be as in the points lemma. For each $(a,b)\in \mathbb{R}^2$ there is a plane $\pi_{ab}\subseteq \mathbb{PR}^3$ such that if $\tau \in PSL_2(\mathbb{R})$ then $\tau(a)=b$ if and only if $\psi(\tau) \in \pi_{ab}$. These planes have the following properties. 
\begin{enumerate}
\item Any triple of distinct planes intersect in a single point. Equivalently, no three planes are colinear.
\item Each $(a,b) \in \mathbb{R}^2$ determines a unique plane (i.e. different pairs of points in $\mathbb{R}^2$ determine different planes).
\item Each pair of distinct planes intersects in a unique line (i.e. different pairs of planes determine different lines).
\item For any $A \subseteq \mathbb{R}$, a point $p \in \mathbb{PR}^3\setminus Q$ is incident to at most $|A|$ of the planes from $\left\{\pi_{ab}:a,b \in A\right\}$. 
\end{enumerate}
\end{lemma}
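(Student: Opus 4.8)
The plan is to read the plane $\pi_{ab}$ straight off the action formula and then to extract all four properties from the resulting linear equation together with the Points lemma and the sharp $3$-transitivity of Lemma \ref{theorem:transitive}. First I would fix $(a,b)\in\mathbb{R}^2$ and a transformation $\tau\in PSL_2(\mathbb{R})$ with entries $p,q,r,s$, and expand the condition $\tau(a)=b$, i.e. $(pa+q)/(ra+s)=b$, clearing denominators to obtain
\[
ap+q-ab\,r-b\,s=0.
\]
Because this is homogeneous of degree one in $(p,q,r,s)$ it is insensitive to the $\pm$-scaling and so defines a genuine plane $\pi_{ab}\subseteq\mathbb{PR}^3$ with coefficient vector $(a,1,-ab,-b)$; running the computation backwards (with the case $ra+s=0$, where $\tau(a)=\infty\neq b$ and $ap+q\neq 0$, as the non-incident boundary case) yields the required equivalence $\tau(a)=b\iff\psi(\tau)\in\pi_{ab}$.

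Properties 2 and 4 should then fall out quickly. For Property 2 I would note that the coefficient vector $(a,1,-ab,-b)$ has nonzero second coordinate, so the projective coefficients pin down $a$ and $b$ uniquely (as the first and negated fourth coordinates divided by the second); hence distinct pairs give distinct planes. For Property 4 I would invoke the Points lemma: a point $p\in\mathbb{PR}^3\setminus Q$ equals $\psi(\tau)$ for a \emph{unique} $\tau$, and $p\in\pi_{ab}$ precisely when $\tau(a)=b$. Since $\tau$ is a function, each $a\in A$ forces at most one admissible $b=\tau(a)$, so at most $|A|$ of the planes $\{\pi_{ab}:a,b\in A\}$ pass through $p$.

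The crux is Properties 1 and 3, which I would treat together because both reduce to the single assertion that no three distinct planes share a common line: if $\pi_1,\pi_2,\pi_3$ shared a line then $\{\pi_1,\pi_2\}$ and $\{\pi_1,\pi_3\}$ would determine the same line, and conversely two distinct pairs with a common intersection line force three planes through that line. My engine here is Lemma \ref{theorem:transitive}: given three planes whose $a$-coordinates are distinct and whose $b$-coordinates are distinct, there is a unique $\tau$ with $\tau(a_i)=b_i$, so $\psi(\tau)\in\mathbb{PR}^3\setminus Q$ lies on all three. To upgrade this to a \emph{single} intersection point I would use that three distinct planes meet in either a point or a line, and rule out a line: a line not lying on $Q$ meets $Q$ in at most two points and hence carries infinitely many image points $\psi(\tau')$, each forcing $\tau'(a_i)=b_i$ and so $\tau'=\tau$, a contradiction; while a line lying on $Q$ carries no image point, contradicting $\psi(\tau)\notin Q$.

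The step I expect to be the genuine obstacle is the configurations excluded above, namely triples of planes sharing a common $a$-coordinate (or, symmetrically, a common $b$-coordinate), to which $3$-transitivity does not apply. A direct computation shows that $\pi_{ab_1},\pi_{ab_2},\pi_{ab_3}$ with the $b_i$ distinct have coefficient vectors spanning only a two-dimensional space, meeting in the line cut out by $ap+q=0$ and $ar+s=0$. The saving observation, which I would verify and then rely on, is that this line lies entirely on $Q$ (there $ps=-apr=qr$), so it contains no image point and is therefore invisible to any incidence count between these planes and points of $\mathbb{PR}^3\setminus Q$ — which is exactly the regime in which the Edelsbrunner--Guibas--Sharir point--plane bound is later applied. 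The non-degeneracy asserted in Properties 1 and 3 is thus the non-degeneracy that survives after discarding $Q$, and handling this degenerate locus carefully is the part of the argument I would watch most closely.
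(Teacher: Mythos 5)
Your proposal is correct and, on the generic configurations, follows the same route as the paper: the plane is read off from $ap+q-abr-bs=0$, Property 4 comes from single-valuedness of $\tau$ at each $a\in A$, and triples $\pi_{ad},\pi_{be},\pi_{cf}$ with $a,b,c$ distinct and $d,e,f$ distinct are handled via the sharp $3$-transitivity of Lemma \ref{theorem:transitive}. You do three things differently, all of them improvements. First, you prove Property 2 directly from the coefficient vector $(a,1,-ab,-b)$ (normalising by the second coordinate recovers $(a,b)$), whereas the paper deduces it from Property 1 by adjoining a third plane. Second, you supply a step the paper omits: uniqueness of $\tau$ only controls the image points of $\psi$ in $\pi_{ad}\cap\pi_{be}\cap\pi_{cf}$, and one must still rule out this intersection being a line, which your dichotomy does (a line off $Q$ meets $Q$ in at most two points and so carries infinitely many points of $\psi(PSL_2(\mathbb{R}))$, forcing many distinct $\tau$; a line on $Q$ carries none, contradicting $\psi(\tau)$ lying on it). Third, and most importantly, you have correctly identified that Properties 1 and 3 are false as literally stated: the distinct planes $\pi_{ab_1},\pi_{ab_2},\pi_{ab_3}$ all contain the line $\{ap+q=0,\ ar+s=0\}$ (and dually for a common second coordinate), so they are collinear and different pairs among them determine the same line. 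The paper's proof of Property 1 silently treats only triples with distinct first and distinct second coordinates, so this is a gap in the paper rather than in your argument; your repair --- that every such common line lies inside $Q$ and hence contains no point of any point set of the form $\psi(T)$ --- is exactly the non-degeneracy needed when Theorem \ref{theorem:EGS} is invoked in Section \ref{section:proofs}, although Property 1 and that hypothesis should then be restated in this weakened form. The one case you should spell out to complete Property 1 is the mixed one, e.g.\ $\pi_{ab_1},\pi_{ab_2},\pi_{cd}$ with $c\neq a$: no $\tau$ exists at all here, but the coefficient vector $(c,1,-cd,-d)$ does not lie in the span of $(a,1,0,0)$ and $(0,0,a,1)$, so the third plane meets the line $\{ap+q=ar+s=0\}$ in a single point of $Q$ and the triple intersection is still a point.
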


\begin{proof}
A projective transformation $\tau = \left[\left(
\begin{array}{cc}
p&q\\
r&s
\end{array}
\right)\right]$
sends $a$ to $b$ if and only if $\frac{ap+q}{ar+s}=b$, which is the same as $ap+q-bar-bs=0.$ For fixed $a,b$ this is a linear constraint on $\psi(\tau)=[p,q,r,s]\in \mathbb{PR}^3$ and so describes a plane in $\mathbb{PR}^3$, which we define to be $\pi_{ab}$.

Now that the planes are constructed, we establish properties $1$ to $4$ in turn
\begin{enumerate}

\item Let $(a,b,c)$ and $(d,e,f)$ be two triples of disinct elements of $\mathbb{R}$. There is by Lemma \ref{theorem:transitive} a unique $\tau\in PSL_2(\mathbb{R})$ that sends $(a,b,c)$ to $(d,e,f)$. So $\pi_{ad}\cap \pi_{be} \cap \pi_{cf}=\psi(\tau)$, which is a single point in $\mathbb{PR}^3$.

\item
If $\pi_{ab}=\pi_{cd}$ for some $(a,b)\neq (c,d)$ then we can pick $(e,f)$ such that $\pi_{ab}\cap \pi_{cd} \cap \pi_{ef}$ is either a line or a plane, which contradicts property $1$.

\item
Suppose that $\pi_{ab}\cap \pi_{cd}= \pi_{a'b'}\cap \pi_{c'd'}$. Then $\pi_{ab}\cap \pi_{cd}\cap \pi_{a'b'} = \pi_{a'b'}\cap \pi_{c'd'}.$ But by property $1$ the set on the left hand side is a point, whereas that on the right is a line, unless $\left\{\pi_{ab},\pi_{cd}\right\}=\left\{\pi_{a'b'},\pi_{c'd'}\right\}$.   

\item Let $p$ be a point in $\mathbb{PR}^3\setminus Q$, so that $p = \psi(\tau)$ for some $\tau \in PSL_2(\mathbb{R})$. For each $a \in A$ there is at most one $b \in A$ for which $p$ is incident to $\pi_{ab}$, as otherwise $\tau(a)$ would take two different values. Counting over all $a \in A$ shows that $p$ is incident to at most $|A|$ planes.
\end{enumerate}

\end{proof}

\section{Incidence theorems}\label{section:incidences}

This section records results about incidences, wich we will use to analyse the points, planes and lines from the previous section. We will need two types of incidence theorem: one for incidences between points and lines, and one for incidences between points and planes.

\subsection{Incidences for lines}

For point-line incidences we will use the well-known Szemer\'edi-Trotter \cite{ST} incidence theorem. Given a set $P$ of points and a set $L$ of lines, we write $I(P,L)$ for the number of incidences between points from $P$ and lines from $L$.

\begin{theorem}[Szemer\'edi-Trotter]\label{theorem:ST}
Let $P$ and $L$ be a set of points and lines respectively in $\mathbb{R}^2$. Then $I(P,L)\ll |P|^{2/3}|L|^{2/3}+|P|+|L|$. 
\end{theorem}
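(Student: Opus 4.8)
The plan is to follow Székely's graph-theoretic proof, which deduces the incidence bound from the crossing number inequality for graphs drawn in the plane. First I would set up an auxiliary graph $G$: take the vertex set to be $P$, and for each line $\ell \in L$ join consecutive points of $P \cap \ell$ along $\ell$ by an edge. A line meeting $P$ in $k$ points then contributes $k-1$ edges, so if $m$ denotes the number of edges we have $m \geq I(P,L) - |L|$ (the lines meeting $P$ in zero or one points contribute incidences but no edges, which is the reason for the $-|L|$ correction). Since two distinct points determine a unique line, no edge is produced by two different lines, so $G$ is simple. Drawing each edge as the relevant segment of its line gives a plane drawing in which crossings occur only where two lines meet; as two lines cross at most once, the crossing number satisfies $\mathrm{cr}(G) \leq \binom{|L|}{2} \leq |L|^2$.

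The key external input is the crossing number inequality: for a simple graph with $n$ vertices and $m \geq 4n$ edges, $\mathrm{cr}(G) \gg m^3/n^2$. I would establish this in two stages. The linear bound $\mathrm{cr}(G) \geq m - 3n$ comes from Euler's formula, since a simple planar graph has at most $3n - 6$ edges and deleting one edge per crossing planarises $G$. To amplify it to the cubic bound, I would pass to a random induced subgraph retaining each vertex independently with probability $p$, apply the linear bound in expectation — using that a vertex, edge and crossing survive with probabilities $p$, $p^2$ and $p^4$ respectively — to obtain $p^4\,\mathrm{cr}(G) \geq p^2 m - 3pn$, and then optimise by choosing $p = 4n/m$ (which is a legitimate probability precisely because $m \geq 4n$).

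Finally I would combine these facts, splitting into two cases according to the size of $m$. If $m \geq 4|P|$, the crossing number inequality together with $\mathrm{cr}(G) \leq |L|^2$ gives $(I - |L|)^3 \ll |P|^2 |L|^2$, hence $I \ll |P|^{2/3}|L|^{2/3} + |L|$. If instead $m < 4|P|$, then directly $m \geq I - |L|$ forces $I < 4|P| + |L|$. Adding the two regimes yields $I(P,L) \ll |P|^{2/3}|L|^{2/3} + |P| + |L|$, as required. I expect the main obstacle to be the crossing number inequality itself, and in particular the probabilistic amplification step, since the rest of the argument is essentially bookkeeping once the graph and its crossing bound are in place; the only other point demanding care is the edge count, where lines meeting $P$ in at most one point must be tracked correctly.
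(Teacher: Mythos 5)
The paper does not prove this statement at all: it is quoted as a known external result with a citation to Szemer\'edi and Trotter, so there is no internal proof to compare yours against. On its own merits, your proposal is the standard Sz\'ekely crossing-number argument and it is correct and complete in outline. The edge count $m \geq I(P,L) - |L|$ is handled properly (lines meeting $P$ in at most one point contribute no edges), the observation that $G$ is simple because two points determine a unique line is the right justification, the bound $\mathrm{cr}(G) \leq \binom{|L|}{2}$ from the natural straight-line drawing is sound, and the two-stage derivation of the crossing number inequality (Euler's formula giving $\mathrm{cr}(G) \geq m - 3n$, then probabilistic amplification with $p = 4n/m$) is the classical route. The final case split on whether $m \geq 4|P|$ correctly recovers all three terms of the bound. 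The one point you flag yourself --- tracking lines with few points --- is indeed the only place where sloppiness could creep in, and you have handled it. This is a more elementary and self-contained proof than anything the paper offers, which simply takes the theorem as a black box.
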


This has the following standard corollary.

\begin{corollary}
Let $L$ be a set of lines in $\mathbb{R}^2$. Then the number of points incident to at least $k$ lines in $L$ is $O\left(\frac{|L|^2}{k^3}+\frac{|L|}{k}\right)$
\end{corollary}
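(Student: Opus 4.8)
The plan is to combine a double-counting argument with the Szemer\'edi-Trotter theorem. Write $P_k$ for the set of points that are incident to at least $k$ lines of $L$, so that the goal is to bound $|P_k|$. First I would count incidences between $P_k$ and $L$ in two ways. Since every point of $P_k$ lies on at least $k$ lines of $L$, the total number of incidences satisfies
$$k|P_k| \leq I(P_k,L).$$
On the other hand, applying Theorem \ref{theorem:ST} to the point set $P_k$ and the line set $L$ gives
$$I(P_k,L) \ll |P_k|^{2/3}|L|^{2/3} + |P_k| + |L|.$$

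Combining these two estimates yields
$$k|P_k| \ll |P_k|^{2/3}|L|^{2/3} + |P_k| + |L|.$$
From here I would proceed by asking which of the three terms on the right dominates. If the first term dominates then $k|P_k| \ll |P_k|^{2/3}|L|^{2/3}$, and dividing through by $|P_k|^{2/3}$ and rearranging gives $|P_k| \ll |L|^2/k^3$. If instead the third term dominates then $k|P_k| \ll |L|$, which rearranges directly to $|P_k| \ll |L|/k$. Taking the larger of the two bounds in either case gives the claimed estimate $|P_k| \ll |L|^2/k^3 + |L|/k$.

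The one point requiring care is the middle term $|P_k|$, which at first sight threatens to give the useless inequality $k|P_k| \ll |P_k|$. I expect this to be the main (albeit minor) obstacle. The resolution is that this term can only dominate when $k$ is bounded above by the absolute constant implicit in Szemer\'edi-Trotter; for all larger $k$ the middle term is absorbed into the left-hand side, since then $k|P_k|$ exceeds a fixed multiple of $|P_k|$. For the remaining finitely many small values of $k$ the conclusion is trivial after enlarging the constant, because the number of points in question is in any case at most $|P_k| \ll |L|^2 \ll |L|^2/k^3$ once $k = O(1)$. Thus the case analysis closes, and the bound holds for all $k$.
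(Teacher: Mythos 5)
Your proposal is correct and follows exactly the paper's argument: double-count incidences via $k|P_k|\leq I(P_k,L)$, compare with the Szemer\'edi--Trotter bound, and rearrange. The paper's own proof is a one-liner that silently skips the case analysis on which term dominates; your treatment of the troublesome middle term $|P_k|$ (absorbing it for $k$ larger than the implicit constant, and handling bounded $k$ trivially) is the standard way to make that step rigorous.
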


\begin{proof}
Let $P$ be the set of points incident to at least $k$ lines in $L$. Then $|P|k\leq I(P,L)$. Comparing to the upper bound in the Szemer\'edi-Trotter theorem shows that $|P|\ll \frac{|L|^2}{k^3}+\frac{|L|}{k}$ as required. 
\end{proof}

\subsection{Incidences for planes}

For point-plane incidences we will use the following result of Edelsbrunner, Guibas and Sharir\footnote{In the original paper \cite{EGS} this bound is multiplied by a factor of the form $|P|^{\epsilon}|\Pi|^{\epsilon}$. However e.g. Apfelbaum and Sharir \cite{AS} report that this additional factor can be eliminated with more careful analysis and so we use the refined version here.} \cite{EGS}.  

\begin{theorem}[Edelsbrunner-Guibas-Sharir]\label{theorem:EGS}
Let $P$ and $\Pi$ be a set of points and planes respectively in $\mathbb{R}^3$. If no three planes are colinear then $I(P,\Pi)\ll |P|^{4/5}|\Pi|^{3/5}+|P|+|\Pi|$. 
\end{theorem}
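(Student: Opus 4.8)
The plan is to reduce the three-dimensional point--plane count to planar point--line counts via a cutting of space, bootstrapping the Szemer\'edi--Trotter theorem (Theorem \ref{theorem:ST}). Write $m=|P|$ and $n=|\Pi|$, and observe that it suffices to bound $I(P,\Pi)$ in the regime where $m^{4/5}n^{3/5}$ dominates the two additive terms. The hypothesis that no three planes are colinear is equivalent to the statement that no line lies in three of the planes, and I would flag this at the outset since it is used repeatedly. A first, naive attempt---slice by each plane $\pi$ and count incidences of $P\cap\pi$ with the lines $\{\pi'\cap\pi\}$, which are distinct by the hypothesis---reduces at once to Szemer\'edi--Trotter; but a short computation (summing the planar bound and applying Cauchy--Schwarz, since slicing counts incident plane-pairs $\sum_p\binom{d_p}{2}$ rather than $I$ itself) yields only a bound of the form $m^{3/4}n^{3/4}$, weaker than the claim already in the balanced case $m=n$ ($m^{3/2}$ versus $m^{7/5}$). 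So a genuine decomposition of space is needed, not slicing by a single plane.

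The main step is to build a $(1/r)$-cutting of $\mathbb{R}^3$ with respect to the planes, for a parameter $r$ to be chosen: a partition into $O(r^3)$ cells, each of whose interior is crossed by at most $n/r$ planes (here the no-three-colinear hypothesis guarantees that a sample of $r$ planes has an arrangement with only $O(r^3)$ cells). I would then split the incidences into two classes: those in which the point lies in the open interior of a cell, and those in which it lies on one of the sampling planes. For an interior point $p$ of a cell $\Delta_i$, every plane through $p$ must cross $\Delta_i$, so the interior incidences total $\sum_i I(P_i,\Pi_i)$ with $\sum_i|P_i|\le m$ and $|\Pi_i|\le n/r$. For a point on a sampling plane $\sigma$, intersecting $\sigma$ with the planes of $\Pi$ gives a planar configuration of $|P\cap\sigma|$ points and lines $\{\pi\cap\sigma\}$---again distinct, precisely because no three planes share a line---so Szemer\'edi--Trotter applies on $\sigma$ and, summed over the $r$ sampling planes, controls the boundary incidences.

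The obstacle, and the reason this is a genuinely hard theorem rather than a one-line corollary of Szemer\'edi--Trotter, is that the target exponents sit exactly at the critical threshold of the resulting recursion. Positing $I(m,n)\ll m^{a}n^{b}$ and feeding the interior term back inductively, concavity of $x\mapsto x^{a}$ together with the $O(r^3)$ cell count gives an interior contribution of order $m^{a}n^{b}\,r^{\,3-3a-b}$; with $(a,b)=(4/5,3/5)$ one has $3-3a-b=0$, so the cutting is exactly balanced and the induction gains nothing whatever $r$ is. This is why the original argument of \cite{EGS} produces only the weaker exponent $m^{4/5+\varepsilon}n^{3/5}$, and why removing the $\varepsilon$ (as in \cite{AS}) demands a sharper, non-recursive accounting of the boundary incidences rather than a crude induction. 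I would therefore expect the bulk of the difficulty to lie not in setting up the decomposition but in this delicate balancing: choosing $r$, disposing of the lower-order additive terms, and handling degenerate points that lie on several sampling planes, so that the boundary contribution is absorbed without the logarithmic or $\varepsilon$ loss.
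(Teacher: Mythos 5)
This statement is not proved in the paper at all: it is imported verbatim from Edelsbrunner--Guibas--Sharir \cite{EGS}, with the removal of the $\varepsilon$-loss attributed to Apfelbaum--Sharir \cite{AS} in a footnote. So the relevant question is whether your sketch constitutes a proof on its own, and it does not. You set up the right machinery (a $(1/r)$-cutting of space combined with Szemer\'edi--Trotter on the sampling planes), you correctly compute that the naive slicing argument only yields $m^{3/4}n^{3/4}$, and you correctly diagnose that with the target exponents $(a,b)=(4/5,3/5)$ the interior term of the recursion scales as $m^{a}n^{b}r^{3-3a-b}$ with $3-3a-b=0$, so the induction is exactly critical and does not close. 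But having identified this as ``the bulk of the difficulty,'' you stop there. The missing idea --- the sharper, non-recursive accounting of the boundary and low-multiplicity incidences that actually delivers the exponent $4/5$ without an $\varepsilon$ or logarithmic loss --- is precisely the content of the theorem, and it is not supplied. A reader cannot reconstruct the proof from what you have written; they can only reconstruct why the obvious proof fails.

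Two smaller points. First, the parenthetical claim that the no-three-colinear hypothesis is what guarantees the $O(r^3)$ cell count is off: an arrangement of $r$ planes in $\mathbb{R}^3$ has $O(r^3)$ cells unconditionally. The hypothesis is needed elsewhere --- it is what makes the lines $\{\pi\cap\sigma\}$ on a sampling plane $\sigma$ pairwise distinct (as you do note), and more fundamentally it is what rules out the configuration of a pencil of planes through a common line carrying all the points, which would give $I=mn$ and falsify the theorem outright; any correct proof must visibly use it at the step where degenerate concentrations on a line are excluded. Second, since the paper treats this as a black-box citation, the appropriate ``proof'' here is a precise reference to \cite{EGS} and \cite{AS} rather than an attempted derivation; if you do want to derive it, you need to either carry out the $\varepsilon$-removal argument in full or explicitly invoke the refined statement from the literature.
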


As with the Szemer\'edi-Trotter theorem, there is a standard corollary.

\begin{corollary}
Let $\Pi$ be a set of planes in $\mathbb{R}^3$, no three of which are colinear. Then the number of points incident to at least $k$ planes in $\Pi$ is $O\left(\frac{|\Pi|^3}{k^5}+\frac{|\Pi|}{k}\right).$
\end{corollary}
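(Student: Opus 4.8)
The plan is to mirror exactly the proof of the earlier Szemer\'edi-Trotter corollary, substituting the Edelsbrunner-Guibas-Sharir bound for the Szemer\'edi-Trotter bound. First I would let $P$ denote the set of points each incident to at least $k$ planes from $\Pi$. Since every point of $P$ accounts for at least $k$ incidences with $\Pi$, this immediately gives the lower bound $k|P| \leq I(P,\Pi)$.

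Next I would invoke Theorem \ref{theorem:EGS}. Its hypothesis is precisely that no three planes are colinear, which is assumed in the corollary, so it applies and yields $I(P,\Pi) \ll |P|^{4/5}|\Pi|^{3/5} + |P| + |\Pi|$. Combining this with the lower bound produces the single inequality $k|P| \ll |P|^{4/5}|\Pi|^{3/5} + |P| + |\Pi|$, and it then remains only to solve this for $|P|$.

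To do so I would observe that at least one of the three terms on the right must itself be $\gg k|P|$. If it is the first term, then rearranging $k|P| \ll |P|^{4/5}|\Pi|^{3/5}$ (dividing through by $|P|^{4/5}$) gives $k|P|^{1/5} \ll |\Pi|^{3/5}$ and hence $|P| \ll |\Pi|^3/k^5$. If it is the third term, then $k|P| \ll |\Pi|$ gives $|P| \ll |\Pi|/k$. Taking the two surviving possibilities together yields the asserted bound $|P| = O(|\Pi|^3/k^5 + |\Pi|/k)$.

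The only point requiring care---and the main, albeit minor, obstacle---is the middle term $|P|$, which would give $k|P| \ll |P|$, i.e. $k \ll 1$. Since the quantity being bounded is finite and the estimate is of interest only when $k$ is at least a small absolute constant, I would simply assume $k$ exceeds the constant implicit in the $\ll$, whereupon the term $|P|$ on the right is absorbed into $k|P|$ on the left and plays no further role. No other difficulty is anticipated; the argument is purely the standard deduction of a point-richness bound from an incidence bound, identical in structure to the Szemer\'edi-Trotter corollary established above.
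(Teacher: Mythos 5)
Your proposal is correct and follows exactly the same route as the paper's own proof: bound $k|P|\leq I(P,\Pi)$ below, apply the Edelsbrunner-Guibas-Sharir theorem above, and solve for $|P|$. The only difference is that you spell out the case analysis and the harmless middle term $|P|$ (which the paper silently absorbs), so there is nothing further to add.
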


\begin{proof}
Let $P$ be the set of points incident to at least $k$ planes in $\Pi$. Then $|P|k\leq I(P,\Pi)$. Comparing to the upper bound in the Edelsbrunner-Guibas-Sharir theorem shows that $|P|\ll \frac{|\Pi|^3}{k^5}+\frac{|\Pi|}{k}$ as required. 
\end{proof}

\section{Proving Theorems \ref{theorem:result1}, \ref{theorem:result2} and \ref{theorem:result3}\label{section:proof}}
\label{section:proofs}

This section use the results so far established to prove Theorems \ref{theorem:result1}, \ref{theorem:result2} and \ref{theorem:result3}.

\subsection{Proof of Theorem \ref{theorem:result1}}
Write $E_1(A)$ for the number of solutions to the equation

\begin{equation}\label{eq:energy1}
X(0,a_1,a_2,a_3)=X(0,b_1,b_2,b_3)
\end{equation}

with each of the $a_i$ and $b_i$ in $A$. Write $\mu_1(x)$ for the number of $a_1,a_2,a_3 \in A$ with $X(0,a_1,a_2,a_3)=x$. Then $\sum_{x \in f(A)}\mu_1(x)\approx|A|^3$, and Cauchy-Schwarz implies that

$$|A|^6 \approx \left(\sum_{x \in f(A)}\mu_1(x)\right)^2 \leq |f(A)|E_1(A).$$

So it suffices to show $E_1(A)\ll |A|^4 \log |A|$. By Lemma \ref{theorem:invariant}, equation (\ref{eq:energy1}) is satisfied precisely when there exists $\tau \in PSL_2(\mathbb{R})$ that fixes $0$ and sends each $a_i$ to $b_i$. Define 

$$T_1=\bigcup_{a,b \in A}\left\{\tau:\tau(0)=0,\tau(a)=b\right\}$$

and write $N_1(\tau)$ for the number of $(a,b)\in A^2$ for which $\tau(a)=b$. Then

\begin{equation*}
\label{eq:energy1a}
E_1(A) \ll \sum_{\tau \in T_1}N_1(\tau)^3.
\end{equation*}

Let $\psi$ be as in the points lemma. Define a set of points by $P=\psi(T_1)$, and a set of lines by $L=\left\{\pi_{ab}\cap \pi_{00}:a,b \in A\right\}$ so that $|L|\approx |A|^2$. The points and lines all lie in the plane $\pi_{00}$. Moreover, if we write $M_1(p)$ for the number of lines from $L$ incident to a point $p$, then $N_1(\tau)=M_1(\psi(\tau))$. So we have

\begin{equation*}
\label{eq:energy1b}
E_1(A) \ll \sum_{p \in P}M_1(p)^3.
\end{equation*}

For each $j \in \mathbb{N}$ write $P_j$ for the set of $p \in P$ with $M_1(p) \in [2^j,2^{j-1})$. Then from the corollary to Szemer\'edi-Trotter we have

\begin{align*}
E_1(A)&\ll \sum_{j=0}^{\log |A|}|P_j|2^{3j}\ll \sum_{j=0}^{\log |A|} \left(\frac{|L|^2}{2^{3j}}+\frac{|L|}{2^j}\right)2^{3j}\approx |A|^4 \log |A|
\end{align*}

as required. \qed

\subsection{Proof of Theorem \ref{theorem:result2}}

Write $E_2(A)$ for the number of solutions to the equation

\begin{equation}\label{eq:energy2}
X(a_1,a_2,a_3,a_4)=X(b_1,b_2,b_3,b_4)
\end{equation}

Using Cauchy-Schwarz as in Theorem \ref{theorem:result1} shows that $|g(A)|\gg \frac{|A|^8}{E_2(A)}$, so it suffices to show that $E_2(A)\ll |A|^6$. Equation (\ref{eq:energy2}) is satisfied precisely when there exists $\tau \in PSL_2(\mathbb{R})$ that sends $a_i$ to $b_i$ for each $i$. Define

$$T_2=\bigcup_{a,b \in A}\left\{\tau:\tau(a)=b\right\}$$

and write $N_2(\tau)$ for the number of $(a,b)\in A^2$ for which $\tau(a)=b$. Then

\begin{equation*}
\label{eq:energy2a}
E_2(A) \ll \sum_{\tau \in T_2}N_2(\tau)^4.
\end{equation*}

Let $\psi$ be as in the points lemma. Define a set of points by $P=\psi(T_2)$, and a set of planes by $\Pi=\left\{\pi_{ab}:a,b \in A\right\}$ so that $|\Pi|\approx |A|^2$. If we write $M_2(p)$ for the number of planes from $\Pi$ incident to a point $p$, then $N_2(\tau)=M_2(\psi(\tau))$. So we have

\begin{equation*}
\label{eq:energy2b}
E_2(A) \ll \sum_{p \in P}M_2(p)^4.
\end{equation*}

For each $j \in \mathbb{N}$ write $P_j$ for the set of $p \in P$ with $M_2(p) \in [2^j,2^{j+1})$. Then from the corollary to Edelsbrunner-Guibas-Sharir we have

\begin{align*}
E_2(A)&\ll \sum_{j=0}^{\log |A|}|P_j|2^{4j}\ll \sum_{j=0}^{\log |A|} \left(\frac{|\Pi|^3}{2^{5j}}+\frac{|\Pi|}{2^j}\right)2^{4j}\ll
|A|^6 \sum_{j=0}^{\infty} \frac{1}{2^{j}} \approx |A|^6. 
\end{align*}

as required. \qed

\subsection{Proof of Theorem \ref{theorem:result3}}
Write $E_3(A)$ for the number of solutions to

\begin{equation}\label{eq:energy3}
\left(X(a_1,a_2,a_3,a_4),X(a_1,a_2,a_3,a_5)\right)=\left(X(b_1,b_2,b_3,b_4),X(b_1,b_2,b_3,b_5)\right)
\end{equation}

with each of the five $a_i$ and five $b_i$ in $A$. By Cauchy-Schwarz, $|h(A)|\gg \frac{|A|^{10}}{E_3(A)}$ so it suffices to show that $E_3(A)\ll |A|^6 \log |A|$. Now equation (\ref{eq:energy3}) is satisfied precisely when there is a projective transformation $\tau$ that sends each of the five $a_i$ to $b_i$. So following the proof of Theorem \ref{theorem:result2} we can find $P$ and $\Pi$ with $|\Pi|\approx |A|^2$ for which 

$$E_3(A)\ll \sum_{j=1}^{\log |A|}\left(\frac{|\Pi|^2}{2^{5j}}+\frac{|\Pi|}{2^j}\right)2^{5j}\approx |A|^6 \log |A|$$ as required. \qed

\section*{Acknowledgements}
The author is grateful to Misha Rudnev for useful conversations, in particular for highlighting the existence of the incidence theorem for planes.

\bibliographystyle{plain}
\bibliography{crossratiobib}

\end{document}